\theoremstyle{plain}
\newtheorem{definition}{Definition}
\newtheorem{theorem}[definition]{Theorem}
\newtheorem{lemma}[definition]{Lemma}
\newtheorem{prop}[definition]{Proposition}
\newtheorem*{ack}{Acknowledgement}
\newcommand{\PP}{\mathcal P}
\newcommand{\Z}{\mathbb{Z}}
\newcommand{\N}{\mathbb{N}}
\newcommand{\F}{\mathcal{F}}
\newcommand{\nhat }[1]{\{1,\ldots,#1\}}
\renewcommand{\subset}{\subseteq}
\newcommand{\pb}[1]{^{(#1)}}
\newcommand{\pob}[1]{_{#1}}
\newcommand\Cal{\mathcal}
\newcommand\na{\mathbb N}
\newcommand\im{invariant measure}
\newcommand\oo{\overline O}
\title{Solvability of Rado systems in D-sets}
\author{M. Beiglb\"ock, V. Bergelson, T. Downarowicz, A. Fish}
\begin{document}

\maketitle\

\begin{center}
This paper is dedicated to Neil Hindman on the occasion of his 65th birthday.
\end{center}

\begin{abstract}
Rado's Theorem characterizes the systems of homogenous linear equations having the property that for any finite partition of the positive integers one cell contains a solution to these equations. Furstenberg and Weiss proved that solutions to those systems can in fact be found in  every central set. (Since one cell of any finite partition is central, this generalizes Rado's Theorem.) We show that the same holds true for the larger class of $D$-sets. Moreover we will see that the conclusion of Furstenberg's Central Sets Theorem is true for all sets  in this class.
\end{abstract}

\section{Introduction}

Schur's Theorem (\cite{Schu16}) states that for any finite partition of the positive integers one cell contains solutions to the equation $x_1+x_2=x_3$.

Another classical result of partition Ramsey theory is van der Waerden's Theorem \cite{Waer27} which states that arithmetic progressions of arbitrary finite length can be found in one cell of any finite partition. This follows from the fact that a solution to the equations $x_1=x_3-x_2=\ldots=x_n-x_{n-1}$ 
can always be found in one cell.\footnote{This somewhat stronger result which asserts that the arithmetic progression and the increment  can be forced to lie in the same cell is due to Brauer \cite{Brau28}.}

Both statements are special instances of Rado's Theorem which provides necessary and sufficient conditions  for the system $A(x_1,\ldots, x_q)^T=0$, $A\in \Z^{p\times q}$ 
  to be partition regular in the sense that for every finite partition of the positive integers one cell  contains $x_1, \ldots, x_q$ satisfying
$A(x_1,\ldots, x_q)^T=0$. Each such system of linear equations is called a \emph{Rado system}.

\begin{theorem}[Rado's Theorem \cite{Rado33}]
A system of linear equations of the form $A(x_1,\ldots, x_q)^T=0, A=(a_{ij})\in \Z^{p\times q}$ is a Rado system iff the index set \( \{1,2,\ldots,q\}\) can be divided
into disjoint subsets \( I \pob 1,I \pob 2,\ldots,I \pob l \) and for all $r\in \nhat l, j \in I \pob 1 \cup \ldots \cup I \pob r $ rational
numbers $ c_j^r$ may be found  such that the following relations are
satisfied:
$$
\sum_{j \in I \pob 1} a_{ij} = 0$$
$$\sum_{j \in I \pob 2} a_{ij} = \sum_{j \in I \pob 1} c_j^1 a_{ij}$$
$$\ldots$$
$$\sum_{j \in I \pob l} a_{ij} = \sum_{j \in I \pob 1 \cup I \pob 2 \cup \ldots \cup
I \pob {l-1}} c_j^{l-1} a_{ij}.$$
\end{theorem}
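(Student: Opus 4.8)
\medskip
\noindent\textbf{Proof proposal.}
Write $\mathbf a_1,\dots,\mathbf a_q\in\Z^p$ for the columns of $A$ and call the displayed right-hand side the \emph{columns condition}: $\{1,\dots,q\}$ splits as $I_1,\dots,I_l$ with $\sum_{j\in I_1}\mathbf a_j=0$ and, for $r\ge 2$, $\sum_{j\in I_r}\mathbf a_j\in\mathrm{span}_{\Q}\{\mathbf a_j:j\in I_1\cup\cdots\cup I_{r-1}\}$, the numbers $c^{r}_j$ being coefficients of such a combination. I would establish the two implications separately.

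\smallskip
\noindent\emph{Sufficiency (columns condition $\Rightarrow$ Rado system).}
The plan is to locate, inside every finite colouring of $\N$, a highly structured monochromatic configuration that is forced to contain a solution, and to produce that configuration by iterating van der Waerden's theorem. First, a purely linear-algebraic step: the columns condition yields parameters $m,p,c\in\N$ such that the solution set of $A(x_1,\dots,x_q)^T=0$ meets every Deuber $(m,p,c)$-set — one substitutes the generators of an $(m,p,c)$-set into the natural parametrisation of the solution space attached to $I_1,\dots,I_l$ (the block $I_1$ using $\sum_{j\in I_1}\mathbf a_j=0$, each later $I_r$ being driven by the combination with coefficients $c^{r}_j$), clearing denominators to stay in $\N$. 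Second, one shows $(m,p,c)$-sets are partition regular: given an $r$-colouring of $\N$, a monochromatic $(m-1,p,c)$-set is obtained by induction on $m$, with base case $m=0$ being van der Waerden's theorem (in Brauer's form, so that a progression and its increment share a cell). Composing the two steps produces a monochromatic solution.

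\smallskip
\noindent\emph{Necessity (Rado system $\Rightarrow$ columns condition).}
For each prime $p$, colour $n\in\N$ by the last nonzero digit of its base-$p$ expansion, i.e.\ by $n/p^{v_p(n)}\bmod p\in\{1,\dots,p-1\}$; this is a colouring with $p-1$ colours. By hypothesis there is a monochromatic solution $x_1,\dots,x_q$. Order the variables by $p$-adic valuation and group those of equal valuation into consecutive blocks $J_0,J_1,\dots$ with $J_0\ne\emptyset$. Reading $\sum_j a_{ij}x_j=0$ modulo the appropriate power of $p$, level by level, gives first $\sum_{j\in J_0}\mathbf a_j\equiv 0\pmod p$ and then, at level $t$, a congruence expressing $\sum_{j\in J_t}\mathbf a_j$ modulo $p$ as an integer combination of $\{\mathbf a_j:j\in J_0\cup\cdots\cup J_{t-1}\}$ (with $p$-dependent coefficients). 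There are only finitely many ordered set partitions of $\{1,\dots,q\}$, so one ``shape'' occurs for infinitely many primes; fix it and put $I_r:=J_{r-1}$. For that shape the congruences hold for infinitely many $p$; projecting $\Q^p$ onto a complement of $\mathrm{span}_{\Q}\{\mathbf a_j:j\in I_1\cup\cdots\cup I_{r-1}\}$, the image of $\sum_{j\in I_r}\mathbf a_j$ would otherwise be a fixed nonzero rational vector with bounded denominator divisible by arbitrarily large primes, which is absurd. Hence each congruence becomes an exact identity over $\Q$ — precisely the columns condition, with the $c^r_j$ read off from it.

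\smallskip
\noindent\emph{Expected main obstacle.}
The combinatorial heart of sufficiency — partition regularity of $(m,p,c)$-sets — is the only genuinely hard ingredient, since it already contains van der Waerden's theorem; everything else there is linear algebra bookkeeping. On the necessity side, the delicate point is the passage from ``a congruence modulo $p$ for infinitely many $p$'' to ``an exact rational identity'': one must bound denominators uniformly and apply the pigeonhole to the finite set of shapes \emph{before} letting $p\to\infty$.
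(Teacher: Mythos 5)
The paper does not prove this statement at all: Rado's Theorem is quoted as a classical black box with a citation to \cite{Rado33}, and the only related machinery the paper actually uses is Deuber's equivalence between the columns condition and the presence of $(m,p,c)$-systems (\cite[Satz 2.1]{Deub73}), which it also imports without proof. So there is nothing in the paper to compare your argument against line by line; what you have written is the standard textbook proof (essentially Rado's necessity argument plus Deuber's route to sufficiency, as in \cite[Chapter 15]{HiSt98}), and its first half is exactly the lemma ``columns condition $\Rightarrow$ every $(m,p,c)$-set contains a solution'' that underlies the paper's Proposition \ref{viaDeuber}. Your outline is correct in both directions, and you have located the genuinely hard ingredient accurately: the partition regularity of $(m,p,c)$-sets. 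Be aware, though, that this is not the routine induction your phrasing suggests. The step from $m$ to $m+1$ requires producing, for every $r$, parameters $(n,q,d)$ such that every $r$-colouring of an $(n,q,d)$-set contains a monochromatic $(m,p,c)$-set, and the base case is not van der Waerden (a $(0,p,c)$-set is a single multiple of $c$); Brauer's form of van der Waerden enters in the inductive step, together with a compactness/finitization argument. As written, that sentence is the one place where a reader could not reconstruct the proof.

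On the necessity side your sketch is sound but glosses one point worth making explicit: at level $t$ the contribution of the earlier blocks $J_0,\dots,J_{t-1}$ to $\sum_j a_{ij}x_j$ must first be shown divisible by $p^{f_t}$ (where $f_t$ is the common valuation on $J_t$) before one can reduce modulo $p$; the resulting integer vector lies in the rational span of the earlier columns, and then your functional/large-prime argument applies (take an integer linear functional $\varphi$ vanishing on that span; $\varphi\bigl(\sum_{j\in I_r}\mathbf a_j\bigr)$ is a fixed integer divisible by infinitely many primes, hence $0$). Also note that the common last digit $d$ is a unit modulo $p$ and so cancels from every congruence, which is why the pigeonhole only needs to run over the finitely many ordered partitions and not over $d$. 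With these two clarifications the argument is complete.
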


We want to mention a corollary\footnote{This was proved independently (but much later then Rado's Theorem) by  Folkman (unpublished) and Sanders \cite{Sand69}.} of Rado's Theorem extending Schur's Theorem.
It is possible to find arbitrarily many numbers  $x_1, \ldots, x_n$, together with all finite sums $x_{k_1}+ \ldots+x_{k_l}, k_1< \ldots<k_l\leq n$ in one cell of any finite partition. 
Only some forty years after the publication of Rado's result, Hindman  (\cite[Theorem 3.1]{Hind74}) established that one can actually find an {infinite} sequence together with all finite sums from its elements in one cell. (See \cite{DHLL95, HiSt00, HiLS03} for more information on infinite partiton regular systems of equations.)

Sometimes a deeper understanding of results in Partition Ramsey Theory is achieved by finding the proper notion of \emph{largeness} which guarantees that one cell of a finite partition contains rich combinatorial structure.  The Theorems of van der Waerden and Szemer\'edi provide an example of this principle: While the first one states that every finite partition of the integers has one cell which contains arbitrarily long arithmetic progressions, the latter reveals that those can be found in every set $S$ of positive upper Banach density $d^*(S)= \overline\lim_{(m-n)\to\infty} |S\cap \{n,\ldots, m\}|/(m-n+1)$. Clearly, at least one cell of each finite partition of $\N$ has positive upper Banach density and thus van der Waerden's Theorem is a corollary of Szemer\'edi's Theorem.

Furstenberg and Weiss improved Rado's result by showing that solutions to Rado systems can be found in every \emph{central set}.\footnote{The Theorem in this form was spelled out in \cite[Theorem 8.22]{Furs81} but can also be deduced from \cite[Theorem 4.4]{FuWe78} which was published before the introduction of central sets.}
One can easily give a short description of central sets using ultrafilters on $\N$, but since we want to postpone dealing with these somewhat esoteric objects to Section 3, we content ourselves with  Furstenberg's original definition (\cite[Definition 8.3]{Furs81}).

A set $S\subseteq \N$ is central iff there exist a dynamical system $(X,T)$ (i.e.\ a compact metric space $X$ and a continuous transformation $T$ of $X$), a point $x\in X$, a uniformly recurrent $y$ in the orbit closure of $x$, and an open neighborhood $U$ of $y$ such that
$$S=\{n\in \N:T^n x\in U\}.$$ ($y\in X$ is called uniformly recurrent if for each neighborhood $U$ of $y$  the set $\{n\in \N:T^n y \in U\}$ is syndetic, i.e.\ has bounded gaps.) 



One can then prove (and this is in fact apparent from the ultrafilter description given in Section \ref{ultrafilterSection}) that one cell of each finite partition of the positive integers is central, that every set containing a central set is central itself and that central sets remain central after removing finitely many points.

Central sets have positive upper Banach density. In fact, if $S$ is central, it posseses the strictly stronger property that there exists $k\in \N$ such that $S\cup (S -1) \cup \ldots \cup (S-k)$ contains arbitrarily long intervals, i.e.\ $S$ is \emph{piecewise syndetic}.

While it is merely an exercise to derive van der Waerden's Theorem from the fact that every piecewise syndetic set contains arbitrarily long arithmetic progressions, Szemer\'edi's Theorem which guarantees the existence of arbitrarily long arithmetic progressions in sets of positive upper Banach density is highly nontrivial. Analogously, one might search for a class of not necessarily piecewise syndetic sets which contain solutions to Rado systems. Clearly positive upper Banach density is not the appropriate notion (for instance the set of all odd numbers contains no configuration of the form $x_1,x_2, x_1+x_2$).
In fact it is possible to find for each $\varepsilon>0$ a set $S$ of density bigger than $1-\varepsilon$ such that for no $t\in \Z$, $S-t$ contains solutions to all Rado systems.\footnote{Following Ernst Strauss (see \cite[Theorem 2.20]{BBHS06}) one can construct a set $S$ with density arbitrarily close to $1$ such that there doesn't exist $t\in\Z$ such that $(S-t)\cap \N n\neq \emptyset$ for every $n\in\N$.
Given positive integers $ x_1,\ldots,x_m, m=n^2$ there exist $i_1<\ldots<i_k\leq m$ such that $x_{i_1}+\ldots+x_{i_k}\in n\N$.
 Hence if $(S-t)\cap \N n =\emptyset$, $S-t$ cannot contain positive integers  $x_1, \ldots, x_{m}$ and all finite sums from these numbers. In particular, no shifted copy of $S$ contains solutions to all Rado systems.} (In contrast to this it is always possible to shift a piecewise syndetic set such that it becomes central (see \cite[Theorem 4.40]{HiSt98}) and then contains solutions to all Rado systems.)

In this paper we prove that solutions of Rado systems are contained in any member of a class of sets which is larger than the class of central sets. This class is comprised of $D$-sets defined in \cite{BeDo08}. The main distinction of $D$-sets from the class of central sets is that in the definition of central sets instead of a uniformly recurrent point, one considers an \emph{essentially recurrent point} $y$, meaning that the set $\{n\in\N  : T^n y \in U\}$ has positive upper Banach density for every neighborhood $U$ of $y$. Note that since every syndetic set has positive upper density, every uniformly recurrent point is an essentially recurrent point.

A set $S\subseteq \N$ is a \emph{$D$-set} iff there exist a dynamical system $(X,T)$ (i.e.\ a compact metric space $X$ and a continuous transformation $T$ of $X$), a pair of points $x,y\in X$  where $y$ is essentially recurrent, and such that $(y,y)$ belongs to the orbit closure of $(x,y)$ in the product system $(X\times X, T\times T)$, and an open neighborhood $U$ of $(y,y)$ such that
$$S=\{n\in \N:(T^n x, T^n y)\in U\}.$$

By \cite[Theorem 2.3]{BeDo08} a set $S\subseteq \N$ is central iff it satisfies the above definition with the twist that $y$ is not just essentially recurrent, but a uniformly recurrent point. Hence every central set is a $D$-set. 
 Similarly to central sets, the family of $D$-sets is closed under forming supersets and every $D$-set has positive upper Banach density. But $D$-sets  don't need to be piecewise syndetic. 
(See \cite{BeMc08}.) In particular the class of $D$-sets is strictly larger than the class of central sets.

So our main result is:

\begin{theorem}\label{mainthm}
Rado systems  are solvable in $D$-sets.
\end{theorem}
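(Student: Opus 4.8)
The plan is to split Theorem~\ref{mainthm} into a combinatorial reduction and an ultrafilter-theoretic core, the latter yielding the stronger conclusion announced in the abstract: that the Central Sets Theorem holds for every $D$-set, not merely for every central set.

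Call a set $C\subseteq\N$ a \emph{CS-set} if it satisfies the conclusion of Furstenberg's Central Sets Theorem, i.e.\ if for every finite list of sequences $f_1,\dots,f_k\colon\N\to\Z$ there are a sequence $(a_n)_{n=1}^{\infty}$ in $\N$ and finite sets $H_1<H_2<\cdots$ of $\N$ with $a_n+\sum_{t\in H_n}f_i(t)\in C$ for all $i$ and all $n$. The first step is the essentially linear-algebraic observation, due in spirit to Furstenberg and Weiss, that every CS-set contains a solution to every Rado system. Given a Rado system $A=(a_{ij})\in\Z^{p\times q}$, Rado's Theorem supplies a partition of $\nhat{q}$ into blocks $I_1,\dots,I_l$ and rationals $c_j^r$ with $\sum_{j\in I_1}a_{ij}=0$ and $\sum_{j\in I_r}a_{ij}=\sum_{j\in I_1\cup\dots\cup I_{r-1}}c_j^{r-1}a_{ij}$. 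Clearing denominators in the $c_j^r$ produces finitely many integer sequences; feeding these into the CS-property of $C$ yields a configuration from which one reads off values $x_1,\dots,x_q$, lying in $C\subseteq\N$ and hence positive, and the block relations just displayed are precisely what force $A(x_1,\dots,x_q)^{T}=0$. Hence it suffices to prove that every $D$-set is a CS-set.

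The second and main step is the Central Sets Theorem for $D$-sets, proved via idempotent ultrafilters; this simultaneously makes good on the ultrafilter description of $D$-sets promised in Section~\ref{ultrafilterSection}. Let $\Delta^{*}=\{p\in\beta\N:\ d^{*}(A)>0\ \text{for every }A\in p\}$ be the set of ultrafilters all of whose members have positive upper Banach density; it is nonempty, since every member of a minimal idempotent is central and therefore has positive density. A short computation shows that $\Delta^{*}$ is a \emph{closed left ideal} of $(\beta\N,+)$: it is an intersection of clopen sets, and if $A\in q+p$ with $p\in\Delta^{*}$ then $\{n:-n+A\in p\}\neq\emptyset$, so $-n+A\in p$ for some $n\in\N$, and therefore $d^{*}(A)=d^{*}(-n+A)>0$. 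A closed left ideal of a compact right-topological semigroup is again a compact right-topological semigroup, so by Ellis's Lemma $\Delta^{*}$ contains idempotents; call these \emph{essential idempotents}. Matching the dynamical definition of a $D$-set against the standard dictionary between orbit closures and ultrafilters (compare \cite{BeDo08}) then shows that a set is a $D$-set precisely when it belongs to some essential idempotent --- the exact analogue of the description of central sets as members of minimal idempotents.

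Now fix a $D$-set $D$ and an essential idempotent $p\ni D$, and run the standard ultrafilter proof of the Central Sets Theorem (as in \cite{HiSt98}) with $p$ playing the role of a minimal idempotent: set $D^{\star}=\{x\in D:-x+D\in p\}\in p$, note that $-x+D^{\star}\in p$ for each $x\in D^{\star}$, and build $(a_n)$ and $(H_n)$ by the usual recursion, at each stage passing to a member of $p$ and extracting from it the next base point $a_n\in\N$ and the next index block $H_n$. The decisive feature is that, the ambient semigroup now being $\Delta^{*}$ rather than all of $\beta\N$, every set produced along the way has positive upper Banach density --- which is exactly the property that general IP-sets may lack and that the Central Sets configurations genuinely require. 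The step I expect to demand the most care is precisely the comparison with the classical argument at the points where it invokes \emph{minimality} of the idempotent rather than mere idempotency: one must verify that each such appeal uses only the closed-left-ideal / compact-right-topological-semigroup structure (which $\Delta^{*}$ retains), together with facts valid for any idempotent, namely that members of $p$ are nonempty and that one may always refine to a $D^{\star}$-type subset. Granting this, $D$ is a CS-set, and by the first step it contains a solution to every Rado system, which is Theorem~\ref{mainthm}.
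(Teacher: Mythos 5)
Your overall architecture coincides with the paper's ultrafilter proof: identify $D$-sets with members of essential idempotents (via \cite{BeDo08}), prove the Central Sets Theorem for such sets (Theorem \ref{DCST}), and then reduce Rado systems to the Central Sets conclusion. But there is a genuine gap at the decisive step. When you ``run the standard ultrafilter proof with $p$ playing the role of a minimal idempotent,'' the recursion requires, at each stage, a set $B\in p$ together with an element $a$ and an index $\alpha\in\F$ such that $a+s^{(i)}_\alpha\in B$ simultaneously for all $i$. In the classical argument this configuration-finding step is exactly where minimality is used: members of minimal idempotents are piecewise syndetic, and the configurations are produced by van der Waerden--type combinatorics (or by algebra in the smallest ideal of a suitable product semigroup). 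Your diagnosis --- that one need only check that these appeals use the closed-left-ideal structure of $\Delta^{*}$ plus idempotency --- is not correct: positive upper Banach density of $B$ does not by itself yield any such configuration, and no amount of soft semigroup algebra will produce one. What is needed is the Furstenberg--Katznelson $IP$ Szemer\'edi Theorem (Theorem \ref{combSze}, a corollary of the multiple $IP$ recurrence theorem of \cite{FuKa85}), applied to the positive-density set $B$ and the $IP$-systems $\big(s^{(i)}_\alpha\big)_{\alpha\in\F}$. This is the deepest input of the entire proof, and your proposal never invokes it; without it the induction cannot even begin.

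A secondary remark: your first step (every set satisfying the Central Sets conclusion solves all Rado systems) is true, but your sketch of it --- clearing denominators and ``reading off'' $x_1,\dots,x_q$ from the block relations --- is not an argument. The actual reduction goes through Deuber's $(m,p,c)$-systems: one shows by an induction on $m$ that a set satisfying the Central Sets conclusion contains $(m,p,c)$-systems for all $m,p,c$ (Proposition \ref{OldWine}, carried out in \cite[Theorem 15.5]{HiSt98}), and Deuber's theorem \cite{Deub73} converts this into solvability of all Rado systems. Since this is a citable known result, it is a presentational rather than a mathematical defect; the missing $IP$ Szemer\'edi input in the main step is the substantive one.
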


We will give two proofs of Theorem \ref{mainthm}. The first one, given in Section
\ref{dynamicSection} is formulated in the language of topological dynamics, while the second  one, presented in Section \ref{ultrafilterSection}  makes use of the algebraic structure on the set of ultrafilters on $\N$. This second proof actually establishes that Furstenberg's Central Sets Theorem (\cite[Proposition 8.21]{Furs81} can be extended to $D$-sets.
In Section \ref{preliminariesSection} we collect some tools which will be used in both proofs of Theorem \ref{mainthm}.  

Note that in \cite{BeDo08} $D$-sets are defined as subsets of the group $\Z$ and also the transformations considered there are invertible. However it is more traditional to work with subsets of the positive integers when the focus of interest lies on combinatorial applications. The proofs of the statements  in \cite{BeDo08} work in this modified setting without any significant changes and the connection between $D$-sets in $\N$ and $D$-sets in $\Z$ is rather natural: Every $D$-set in $\N$ is a $D$-set in $\Z$ and $S\subset \Z$ is a $D$-set iff $S\cap\N$ or $(-S)\cap \N$ is a $D$-set in $\N$. (The analogous statement holds true for central sets.) 

One might wonder whether any set satisfying the conclusion of the Central Sets Theorem
must have positive Banach density.  It is shown in \cite{Hind08}
that this is not the case.

\section{Preliminaries}\label{preliminariesSection} 

The following concept is due to Deuber (\cite{Deub73}, see also \cite[Chapter 15]{HiSt98}.):
  Given positive integers $m,p,c$, the \emph{\( (m,p,c) \)-system} generated by the $(m+1)$-tuple $(s^{(0)},\ldots,s^{(m)})$ is the
following array of numbers:
\[c s\pb0,\]
\[c s\pb1+i \pob 0s\pb0, \,\, |i \pob 0| \leq p,\]
\[\cdots\]
\[c s\pb m + i \pob {m-1}s\pb{m-1} + \ldots + i \pob 0 s\pb0, \,\, |i \pob {m-1}|,\ldots,|i \pob 0| \leq p.\]
Deuber (\cite[Satz 2.1]{Deub73}) proved that every 
 Rado-system is solvable within a set $S$  of positive integers iff for any triple $(m,p,c)$ of positive integers   \( S \) contains an \((m,p,c)\)-system. Thus for our purposes it is sufficient to prove the following result.

\begin{prop}\label{viaDeuber}
Let $S$ be a $D$-set and $m,p,c$ positive integers. Then $S$ contains an 
$(m,p,c)$-system.
\end{prop}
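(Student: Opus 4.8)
The plan is to realize the $(m,p,c)$-system inside $S$ by running a ``branching'' recurrence argument in the product system $(X\times X, T\times T)$, exactly as one does for central sets, but keeping track of the fact that the recurrence we have at our disposal is only via an essentially recurrent point $y$ rather than a uniformly recurrent one. So suppose $S=\{n\in\N:(T^nx,T^ny)\in U\}$ with $y$ essentially recurrent and $(y,y)\in\overline{O_{T\times T}(x,y)}$. First I would record the two key facts about essentially recurrent points that we will need: (i) for any neighborhood $V$ of $y$, the return-time set $\{n:T^ny\in V\}$ has positive upper Banach density, hence in particular is an IP$_0$-set / contains long blocks of the kind produced by iterating the recurrence; and (ii) the set $A=\{n\in\N:(T^nx,T^ny)\in U'\}$, for a suitable slightly shrunk neighborhood $U'\subseteq U$ of $(y,y)$, is itself a $D$-set and is contained in $S$ (this just uses that $D$-sets are closed under supersets, together with choosing $U'$ so that $\overline{U'}\subseteq U$). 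The point of (ii) is that we may freely replace $U$ by a smaller neighborhood and still be proving a statement about a subset of $S$.

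Next I would set up the inductive construction of the generators $s^{(0)},\ldots,s^{(m)}$. For the base case, using that $d^*\{n:T^ny\in V_0\}>0$ for a small neighborhood $V_0$ of $y$ and that $(y,y)$ is in the orbit closure of $(x,y)$, I would first pick $n$ with $(T^nx,T^ny)$ very close to $(y,y)$; then $cs^{(0)}$ should be taken to be such an $n$ (after arranging divisibility by $c$, which one handles by passing to the system $(X^c,T^c)$ or by absorbing the factor $c$ into the choice of neighborhoods as in the classical proof). Having chosen $s^{(0)},\ldots,s^{(k-1)}$ and a descending chain of neighborhoods $U=W_0\supseteq W_1\supseteq\cdots\supseteq W_{k-1}$ of $(y,y)$ so that all the already-constructed array entries $cs^{(j)}+i_{j-1}s^{(j-1)}+\cdots+i_0s^{(0)}$ land in $S$, I would produce $s^{(k)}$ by: shrinking to a neighborhood $W_k\subseteq W_{k-1}$ so small that the finitely many maps $z\mapsto T^{i_{k-1}s^{(k-1)}+\cdots+i_0s^{(0)}}(z)$ (over all $|i_j|\le p$) each carry $W_k$ into $W_{k-1}$ — this uses continuity of $T$ and finiteness of the range of the $i_j$'s — and then finding $cs^{(k)}\in A_{W_k}$, where $A_{W_k}=\{n:(T^nx,T^ny)\in W_k\}$, large enough that all the shifts $cs^{(k)}+i_{k-1}s^{(k-1)}+\cdots+i_0s^{(0)}$ are positive. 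That last step is exactly where essential recurrence is used in place of uniform recurrence: $A_{W_k}$ is nonempty (indeed a $D$-set, hence infinite, hence has positive upper Banach density) because $(y,y)$ lies in the orbit closure of $(x,y)$ and because $W_k$ is still a neighborhood of $(y,y)$; we do not need syndeticity, only that we can keep finding such return times.

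Finally I would verify that the array so constructed is genuinely an $(m,p,c)$-system contained in $S$: for the generic entry $cs^{(k)}+i_{k-1}s^{(k-1)}+\cdots+i_0s^{(0)}$ with $|i_j|\le p$, apply the point $(T^{cs^{(k)}}x,T^{cs^{(k)}}y)\in W_k$ under the composition of maps $T^{i_{k-1}s^{(k-1)}},\ldots,T^{i_0s^{(0)}}$; by the nesting property each application keeps us inside the previous $W$, so we land in $W_0=U$, i.e.\ the entry is in $S$. The main obstacle, and the thing the proof must get exactly right, is the bookkeeping in the inductive step: one must shrink the neighborhood \emph{before} choosing the next generator (so that $W_k$ depends only on $s^{(0)},\ldots,s^{(k-1)}$), and one must be careful that the index ranges $|i_j|\le p$ are \emph{symmetric} around $0$, so that the ``error'' shifts $i_js^{(j)}$ can be negative — this forces us to demand that the neighborhoods be mapped correctly by $T^{-1}$ as well, which in the $\N$-setting (no invertibility) is handled by taking $W_k$ small enough that $T^{i}W_k\subseteq W_{k-1}$ holds for the finitely many relevant positive exponents $i$ and noting that the entries are still genuine positive integers because $cs^{(k)}$ dominates. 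Modulo the usual care with the divisibility-by-$c$ issue, the rest is the standard iterated-recurrence bookkeeping and goes through verbatim from the central-set case, the only change being ``essentially recurrent'' in place of ``uniformly recurrent''.
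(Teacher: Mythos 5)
There is a genuine gap, and it sits exactly at the heart of the inductive step. You claim that, having fixed $s^{(0)},\dots,s^{(k-1)}$, you can shrink to a neighborhood $W_k\subseteq W_{k-1}$ of $(y,y)$ so small that each map $z\mapsto (T\times T)^{j}z$ with $j=i_{k-1}s^{(k-1)}+\cdots+i_0s^{(0)}$, $|i_j|\le p$, carries $W_k$ into $W_{k-1}$, ``by continuity of $T$ and finiteness of the range of the $i_j$'s.'' Continuity only gives you that $\big((T\times T)^{j}\big)^{-1}(W_{k-1})$ is open; to fit a neighborhood of $(y,y)$ inside it you need $(T\times T)^{j}(y,y)=(T^jy,T^jy)$ to lie in $W_{k-1}$, i.e.\ you need every integer combination $i_{k-1}s^{(k-1)}+\cdots+i_0s^{(0)}$ to be a return time of $y$ to a small neighborhood of itself. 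Nothing in your construction arranges this: the $s^{(j)}$ were chosen only so that $cs^{(j)}$ itself is a return time of $(x,y)$, and already for $p\ge 2$ the requirement that $2s^{(0)}$ (let alone $-s^{(0)}$, in a non-invertible system) also be a return time is a multiple-recurrence statement, not a consequence of single recurrence plus continuity. This is precisely the content of the proposition: an $(m,p,c)$-system contains long arithmetic progressions together with their common differences, so any proof must invoke van der Waerden/Szemer\'edi-type machinery. Your remark that the argument ``goes through verbatim from the central-set case'' misremembers that case: even for uniformly recurrent $y$ the proof is not a nested-neighborhood bookkeeping argument but uses an IP van der Waerden/Hales--Jewett-type dynamical input.

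For $D$-sets the situation is strictly harder, because the return-time sets of an essentially recurrent point are only of positive upper Banach density, not syndetic, so topological multiple recurrence is unavailable. The paper's proof supplies the missing multiple recurrence measure-theoretically: essential recurrence of $y$ yields (Theorem \ref{berg-down}) a $T\times T$-invariant measure $\mu$ with $\mu(U)>0$ supported on $\oo(y,y)\subseteq\oo(x,y)$, and the Furstenberg--Katznelson multiple $IP$ recurrence theorem (Theorem \ref{IPmrt}), applied to the $(2p+1)^{m+1}$ commuting $IP$-systems $(T\times T)^{i_0s^{(0)}_\alpha+\cdots+i_ms^{(m)}_\alpha}$ built from an $IP$-family of already-constructed generating tuples (Lemma \ref{onetomany}), produces a single $\alpha$ and a single time $s$ for which all the required shifts land in $U$ simultaneously. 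Your proposal uses essential recurrence only to assert that certain return sets are nonempty, which cannot suffice; you would need to incorporate the invariant measure and a multiple (IP) recurrence theorem, or some equivalent deep input, to close the gap. The divisibility-by-$c$ and negative-exponent issues you flag are real but secondary; the paper handles the former with Lemma \ref{divisible} after first upgrading a single system to an $IP$-family of systems.
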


Since we are going to prove the existence of structures extending arithmetic progressions in sets which need not be piecewise syndetic, it is no surprise that we will employ some version of Szemer\'edi's Theorem. In fact we shall use the Furstenberg and Katznelsons's deep  \emph{multiple $IP$ recurrence Theorem} and its combinatorial corollary, the \emph{$IP$ Szemer\'edi Theorem}.

To formulate these theorems we introduce some notation: By $\F$ we denote the set of all finite nonempty sets of positive integers. For $\alpha, \beta \in \F$, we write $\alpha <\beta $ iff $\max \alpha< \min \beta$.  
Given a sequence $s_ 1 , s_ 2, \ldots$  in $\Z$ or $\Z^m$ and $\alpha=\{k_1, \ldots , k_l\}\in \F, k_1<\ldots<k_l$, we let 
$s_\alpha=s_{k_1}+\ldots+s_ {k_l}$ and call the family $(s_\alpha)_{\alpha\in\F}$ an \emph{$IP$-system}. 
Similarly, for a  sequence $T_ 1, T _ 2 ,\ldots $ of commuting transformations of a space, we assign to $\alpha$ the transformation $T_\alpha = T_ {k_1}\circ\ldots \circ T_ {k_l}$ and call $(T_\alpha)_{\alpha\in\F}$ an \emph{$IP$-system of transformations}.

\begin{theorem}\label{IPmrt}(multiple $IP$-recurrence theorem, see \cite[Theorem A]{FuKa85})
Let \( (X,\mathcal{B},\mu) \) be a probability measure space. Let 
$(T\pb1_\alpha)_{\alpha\in \F}, \ldots, (T\pb p_\alpha)_{\alpha\in \F}$ be commuting $IP$-systems of 
transformations which preserve  \( \mu \). Then for
every \( A \in \mathcal{B} \) with \( \mu(A) > 0\) there exists \(
\alpha \in \F \) such that
\[
\mu\big(A\cap \big(T_{\alpha}\pb1\big)^{-1} A \cap \ldots \cap \big(T_{\alpha}\pb{p}\big)^{-1}A\big) > 0.
\]
\end{theorem}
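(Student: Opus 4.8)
The plan is to transport Furstenberg's ergodic proof of Szemer\'edi's theorem to the $IP$ setting, following the method of Furstenberg and Katznelson. The first move is to replace ordinary limits by \emph{$IP$-limits}: for a bounded sequence $(x_\alpha)_{\alpha\in\F}$ in a compact metric space (or a Hilbert ball), Hindman's theorem — equivalently, the existence of idempotent ultrafilters on $\N$ — produces a \emph{sub-$IP$-ring} $\F^{(1)}\subseteq\F$, namely the family of finite unions drawn from a fixed increasing sequence $\alpha_1<\alpha_2<\cdots$ in $\F$, along which $x_\alpha$ converges in the sense that each neighborhood of the limit contains $x_\alpha$ for all sufficiently large $\alpha\in\F^{(1)}$. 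I would then recast the goal as the stronger \emph{$SZ$ property}: for each $A$ with $\mu(A)>0$ the multiple correlations $\mu\big(A\cap(T^{(1)}_\alpha)^{-1}A\cap\cdots\cap(T^{(p)}_\alpha)^{-1}A\big)$ admit, along a suitable sub-$IP$-ring, an $IP$-limit that is strictly positive. Since an $IP$-limit is realized by cofinally many $\alpha$, positivity of the limit yields the single $\alpha$ demanded by the statement.

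\textbf{Structure theory.} The engine is a relative structure theorem for $(X,\mathcal B,\mu)$ with respect to the $p$ \emph{commuting} $IP$-systems. I would develop the $IP$-analogues of the two basic notions: a factor extension $\mathcal B'\subseteq\mathcal B''$ is \emph{$IP$-compact} (almost periodic) if $L^2(\mathcal B'')$ is, over $\mathcal B'$, generated by functions $f$ whose orbits $\{T^{(i)}_\alpha f\}$ are conditionally precompact, and \emph{$IP$-weak-mixing} if the relative self-joining carries no nonconstant conditionally almost-periodic functions. The structure theorem then presents $\mathcal B$ as the top of a (possibly transfinite) tower $\mathcal B_0\subseteq\mathcal B_1\subseteq\cdots$ of factors in which each successor extension is $IP$-compact, up to a maximal $IP$-distal factor above which the extension to all of $\mathcal B$ is $IP$-weak-mixing. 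Proving that $IP$-compactness and $IP$-weak-mixing are genuinely dichotomous, and that a primitive (compact) extension exists below any nontrivial extension, is the heart of the structural part.

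\textbf{Climbing the tower.} With the tower in hand the proof is an induction on the factors, starting from the trivial factor, where $SZ$ is immediate. At a compact successor stage I would exploit almost periodicity to produce, for functions measurable over the larger factor, an $IP$-set of times $\alpha$ along which every $T^{(i)}_\alpha f$ simultaneously returns close (in conditional $L^2$) to $f$; combined with the inductive hypothesis on the smaller factor, this return property upgrades $SZ$ to the larger one. At a weak-mixing stage I would prove an $IP$ van der Corput / Hilbert-space lemma — if the iterated $IP$-limit (first in $\beta$, then in $\alpha$) of $\langle u_{\alpha\cup\beta},\,u_\beta\rangle$ over $\alpha<\beta$ vanishes, then $u_\alpha$ $IP$-converges weakly to $0$ — and apply it repeatedly to show that the multiple correlations over the larger factor have the same $IP$-limit as their projections onto the smaller factor, so $SZ$ again descends to an already-handled factor. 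A separate argument is needed at limit stages of the tower, where $SZ$ must be shown to persist under increasing unions of factors; reaching $\mathcal B$ at the top yields $SZ$, hence the theorem.

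\textbf{Main obstacle.} I expect the decisive difficulty to be the weak-mixing step together with the relentless bookkeeping of $IP$-rings it forces. Each application of the van der Corput lemma and each appeal to simultaneous convergence of a correlation term requires passing to a further sub-$IP$-ring, and one must arrange that the finitely many but interdependent passages — across the $p$ commuting transformations and the several correlation terms — can be carried out coherently, so that all the $IP$-limits in play exist on one common ring. Marrying this delicate $IP$-limit management to the relative structure theorem, and separately verifying the compact/weak-mixing dichotomy and the limit-stage persistence of $SZ$, is where essentially all the depth of the Furstenberg--Katznelson argument resides.
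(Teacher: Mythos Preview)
The paper does not prove this theorem at all: it is quoted verbatim from \cite{FuKa85} and used as a black-box tool in the proof of Proposition~\ref{smallversion}. So there is no ``paper's own proof'' to compare against.

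That said, your outline is a faithful high-level account of the Furstenberg--Katznelson argument in \cite{FuKa85}: recasting the problem via $IP$-limits and the $SZ$ property, building a tower of factors via an $IP$-compact/$IP$-weak-mixing dichotomy, and climbing the tower by handling compact extensions through simultaneous almost-periodic returns and weak-mixing extensions through an $IP$ van der Corput lemma. Your identification of the weak-mixing step and the coherent management of sub-$IP$-rings as the main technical burden is accurate. For the purposes of this paper, however, none of that machinery needs to be reproduced; a citation suffices.
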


We plan to apply Theorem \ref{IPmrt} in the dynamical proof of Proposition \ref{viaDeuber}. The link to $D$-sets will be established using the following result:

\begin{theorem}(\cite[Theorem 2.6]{BeDo08})
\label{berg-down} Let \( (X,T) \) be a dynamical system, let \( y \in X \) be an essentially recurrent point  and $U$ a neighborhood of $y$. Then  
there exists a probability Borel measure \( \mu \) on \( X \)
preserved by the action \( T \) such that \( \mu(U) > 0 \).
\end{theorem}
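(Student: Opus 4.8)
The plan is to obtain $\mu$ as a weak$^{*}$ limit of empirical (Cesàro) measures along the orbit of $y$, in the spirit of the Krylov--Bogolyubov construction, but sampling over the intervals that witness the positive upper Banach density of the return times of $y$ to a neighborhood. First I would shrink $U$: since a compact metric space is regular, I can fix an open neighborhood $V$ of $y$ (for instance a small ball) with $\overline V \subseteq U$. Because $y$ is essentially recurrent, the return set $R = \{n \in \N : T^n y \in V\}$ has positive upper Banach density, say $d^*(R) = \delta > 0$. Unwinding the definition of $d^*$ then yields intervals $I_k = \{a_k, \ldots, b_k\}$ with $b_k - a_k \to \infty$ and
$$\frac{|R \cap I_k|}{|I_k|} \longrightarrow \delta, \qquad |I_k| := b_k - a_k + 1.$$

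Next I would form the Borel probability measures
$$\mu_k = \frac{1}{|I_k|} \sum_{n=a_k}^{b_k} \delta_{T^n y},$$
where $\delta_z$ denotes the point mass at $z$. As $X$ is compact metric, the set of Borel probability measures on $X$ is weak$^{*}$ compact, so after passing to a subsequence I may assume $\mu_k \to \mu$ weak$^{*}$. Invariance of $\mu$ follows from the standard telescoping estimate: for every $f \in C(X)$,
$$\Big| \int f \circ T \, d\mu_k - \int f \, d\mu_k \Big| = \frac{1}{|I_k|}\big| f(T^{b_k+1} y) - f(T^{a_k} y) \big| \le \frac{2\|f\|_\infty}{|I_k|} \to 0,$$
so in the limit $\int f \circ T \, d\mu = \int f \, d\mu$ for all $f \in C(X)$, which is exactly $T$-invariance of $\mu$.

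It remains to check $\mu(U) > 0$, and this is where the choice of $V$ pays off. Applying the Portmanteau theorem to the closed set $\overline V$ gives $\mu(\overline V) \ge \limsup_k \mu_k(\overline V)$; since $\mu_k(\overline V) \ge \mu_k(V) = |R \cap I_k|/|I_k| \to \delta$ and $\overline V \subseteq U$, we conclude $\mu(U) \ge \mu(\overline V) \ge \delta > 0$. The only subtle point is exactly this last step: for the open set $U$ the Portmanteau inequality runs the wrong way ($\mu(U) \le \liminf_k \mu_k(U)$), so one cannot bound $\mu(U)$ from below directly; passing first to the smaller open set $V$ with $\overline V \subseteq U$ and then to the closed set $\overline V$ is precisely what converts the available lower bound on the empirical masses into a lower bound on $\mu$.
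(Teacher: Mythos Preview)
Your argument is correct. The paper itself does not supply a proof of this statement---it is merely quoted from \cite{BeDo08} as a tool---so there is nothing to compare against in the present text. That said, the Krylov--Bogolyubov construction you carry out (averaging $\delta_{T^n y}$ over intervals witnessing $d^*(R)>0$, passing to a weak$^*$ limit, and using Portmanteau on the closed set $\overline V\subseteq U$) is exactly the standard and expected route; it is essentially how the result is obtained in the cited source. Your handling of the only delicate point---that the Portmanteau inequality for open sets goes the wrong way, forcing the passage through a smaller $V$ with $\overline V\subseteq U$---is precisely right.
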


 In the ultrafilter proof of Proposition \ref{viaDeuber} we use the above mentioned combinatorial  corollary of Theorem \ref{IPmrt}.

\begin{theorem}\label{combSze}($IP$ Szemer\'edi Theorem) 
Assume that $S\subset \N$ has posiive upper Banach density and let $(s\pb1_\alpha)_{\alpha \in \F}, \ldots, (s\pb p_\alpha)_{\alpha \in \F}$ be $IP$-systems of integers. Then there exist  $\alpha\in \F$ and $a\in S$ such that $a +s\pb1_\alpha, \ldots, a+ s\pb p_\alpha \in S$.
\end{theorem}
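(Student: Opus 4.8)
The plan is to deduce Theorem~\ref{combSze} from the multiple $IP$-recurrence theorem (Theorem~\ref{IPmrt}) by means of the Furstenberg correspondence principle. Fix, once and for all, sequences $(s\pb{j}_n)_{n\geq 1}$ in $\Z$ generating the given $IP$-systems of integers, $j=1,\ldots,p$.

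First I would set up the correspondence. Let $X=\{0,1\}^{\Z}$, let $T$ be the (invertible) shift, and let $x\in X$ be the indicator of $S$. Pick intervals $I_N\subset\Z$ with $|I_N|\to\infty$ along which $|S\cap I_N|/|I_N|\to d^*(S)$, pass to a further subsequence so that the averages $\frac1{|I_N|}\sum_{k\in I_N}\delta_{T^k x}$ converge weak-$*$ to a Borel probability measure $\mu$ on $X$, which is then $T$-invariant. Put $A=\{\omega\in X:\omega_0=1\}$, a clopen set. Because indicators of clopen cylinders are continuous, one obtains $\mu(A)=d^*(S)>0$ and, for all integers $n_1,\ldots,n_p$,
\[
\mu\big(A\cap T^{-n_1}A\cap\ldots\cap T^{-n_p}A\big)\ \leq\ d^*\big(S\cap(S-n_1)\cap\ldots\cap(S-n_p)\big),
\]
since $T^k x$ lies in the left-hand intersection exactly when $k\in S\cap(S-n_1)\cap\ldots\cap(S-n_p)$.

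Next I would manufacture the required $IP$-systems of transformations. For each $j$ set $T\pb{j}_n:=T^{s\pb{j}_n}$ for $n\geq1$. As $T$ is a single invertible transformation, all these powers commute, so $(T\pb{1}_n),\ldots,(T\pb{p}_n)$ are commuting sequences of $\mu$-preserving transformations; moreover, for $\alpha=\{k_1<\ldots<k_l\}$ one has $T\pb{j}_{k_1}\circ\ldots\circ T\pb{j}_{k_l}=T^{s\pb{j}_{k_1}+\ldots+s\pb{j}_{k_l}}=T^{s\pb{j}_\alpha}$, so $(T^{s\pb{j}_\alpha})_{\alpha\in\F}$ is precisely the $IP$-system of transformations generated by the $T\pb{j}_n$. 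Applying Theorem~\ref{IPmrt} to $A$ and these $p$ commuting $IP$-systems produces a single $\alpha\in\F$ with
\[
\mu\big(A\cap T^{-s\pb{1}_\alpha}A\cap\ldots\cap T^{-s\pb{p}_\alpha}A\big)>0.
\]
By the displayed inequality, $S\cap(S-s\pb{1}_\alpha)\cap\ldots\cap(S-s\pb{p}_\alpha)\neq\emptyset$, and any $a$ in this intersection satisfies $a\in S$ and $a+s\pb{j}_\alpha\in S$ for $j=1,\ldots,p$; positivity of all these numbers is automatic because $S\subseteq\N$.

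Since Theorem~\ref{IPmrt} is taken for granted, there is no genuine obstacle here: the theorem is essentially an immediate translation of multiple $IP$-recurrence into combinatorics. The only points that require care are the bookkeeping in the correspondence principle --- extracting $\mu$ so that $\mu(A)=d^*(S)>0$ and so that the above inequality holds with $d^*$ on the right --- and the routine check that replacing the integers $s\pb{j}_n$ by the transformations $T^{s\pb{j}_n}$ really does turn the given $IP$-systems of integers into \emph{commuting} $IP$-systems of $\mu$-preserving transformations in the sense demanded by Theorem~\ref{IPmrt}.
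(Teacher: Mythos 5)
Your proof is correct. The paper itself gives no proof of Theorem \ref{combSze} --- it simply quotes it as the standard combinatorial corollary of Theorem \ref{IPmrt} from Furstenberg--Katznelson --- and the derivation it implicitly relies on is exactly the one you wrote out: the Furstenberg correspondence principle applied to the shift on $\{0,1\}^{\Z}$, followed by an application of multiple $IP$-recurrence to the commuting $IP$-systems $\big(T^{s\pb{j}_\alpha}\big)_{\alpha\in\F}$. All the points needing care (using the two-sided shift so that negative $s\pb{j}_n$ cause no trouble, the inequality $\mu\big(A\cap T^{-n_1}A\cap\ldots\cap T^{-n_p}A\big)\leq d^*\big(S\cap(S-n_1)\cap\ldots\cap(S-n_p)\big)$, and the fact that powers of a single transformation give commuting $IP$-systems in the sense of Theorem \ref{IPmrt}) are handled correctly.
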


We'll also need the following lemma on $IP$-systems. (The proof is left as an exercise.)
\begin{lemma}\label{divisible}
Let $(s_\alpha)_{\alpha\in \F}$ be an $IP$-system of integers and let $c\in\N$. There exist $\alpha_1<\alpha_2<\ldots$ in $\F$ such that for every $n\in \N, s_{\alpha_n}$ is divisible by $c$.  
\end{lemma}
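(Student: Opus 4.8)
The plan is to prove this by an elementary pigeonhole argument on partial sums modulo $c$. Recall that an $IP$-system $(s_\alpha)_{\alpha\in\F}$ of integers comes from an underlying sequence $s_1,s_2,\ldots$ in $\Z$ via $s_\alpha=\sum_{k\in\alpha}s_k$. Set $\sigma_0=0$ and $\sigma_n=s_1+\cdots+s_n=s_{\{1,\ldots,n\}}$ for $n\ge1$, so that $s_{\{a+1,a+2,\ldots,b\}}=\sigma_b-\sigma_a$ whenever $0\le a<b$.

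The key observation is that among any $c+1$ consecutive partial sums $\sigma_N,\sigma_{N+1},\ldots,\sigma_{N+c}$, two are congruent modulo $c$, say $\sigma_a\equiv\sigma_b\pmod c$ with $N\le a<b\le N+c$. Then $\alpha:=\{a+1,a+2,\ldots,b\}$ belongs to $\F$, satisfies $\min\alpha>N$ and $\max\alpha\le N+c$, and $c\mid s_\alpha$ because $s_\alpha=\sigma_b-\sigma_a$. Now one builds the desired sequence recursively: take $N:=0$ to obtain $\alpha_1$ with $\max\alpha_1\le c$; given $\alpha_1<\cdots<\alpha_n$, apply the observation with $N:=\max\alpha_n$ to obtain $\alpha_{n+1}\in\F$ with $\min\alpha_{n+1}>\max\alpha_n$ (hence $\alpha_n<\alpha_{n+1}$) and $c\mid s_{\alpha_{n+1}}$. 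The sequence $\alpha_1<\alpha_2<\cdots$ then has the required property; moreover, since the $\alpha_n$ are pairwise $<$-ordered, every finite union $\alpha_{n_1}\cup\cdots\cup\alpha_{n_l}$ is again an element of $\F$ on which $s$ is divisible by $c$, so one in fact obtains a full sub-$IP$-system lying in $c\Z$.

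There is essentially no serious obstacle here; the only point requiring care is the bookkeeping, namely starting the search for each new block $\alpha_{n+1}$ at $N=\max\alpha_n$ so that the ordering $\alpha_n<\alpha_{n+1}$ in $\F$ genuinely holds. One could alternatively deduce the statement from Hindman's finite sums theorem applied to a suitable finite coloring, but that would be far heavier machinery than the problem warrants.
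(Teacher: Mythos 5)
Your argument is correct, and since the paper explicitly leaves the proof of this lemma as an exercise, your pigeonhole argument on the partial sums $\sigma_n$ modulo $c$ is exactly the intended elementary solution: the bookkeeping with $N=\max\alpha_n$ correctly ensures $\alpha_n<\alpha_{n+1}$ in the sense $\max\alpha_n<\min\alpha_{n+1}$ used in the paper. Your closing observation that the blocks $\alpha_1<\alpha_2<\cdots$ in fact yield a full sub-$IP$-system with values in $c\Z$ is a pleasant bonus, though the application in Section 3 only needs a single suitable $\alpha$.
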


\section{A proof via Topological Dynamics}\label{dynamicSection}

The proof of Proposition \ref{viaDeuber} is more transparent for $c=1$. Therefore we will first restrict ourselves to this special case and make some remarks on what needs to be changed to achieve the result in full generality later.

For the rest of this section, fix a dynamical system $(X,T)$ and $x,y\in X$ such that  $y$ is essentially recurrent, and such that $(y,y)$ belongs to the orbit closure of $(x,y)$ in the product system $(X\times X, T\times T)$.
Given an open neighboorhood $U$ of $(y,y)$, we let  $S_U=\{n\in \N:(T^n x, T^n y)\in U\}.$ In this setting Proposition \ref{viaDeuber} (for $c=1$) translates to:
\begin{prop}\label{smallversion}
     Let $m,p\in\N$ and let $U$ be an open neighborhood of $(x,y)$. Then $S_U$ contains an $(m,p,1)$-system.  
\end{prop}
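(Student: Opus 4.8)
The goal is to find, inside $S_U$, an $(m,p,1)$-system generated by some tuple $(s^{(0)},\ldots,s^{(m)})$. The natural idea is to produce the generators $s^{(0)},\ldots,s^{(m)}$ as "return times" along an $IP$-structure, so that the $IP$ Szemer\'edi / multiple $IP$-recurrence machinery (Theorem \ref{IPmrt}) can be brought to bear. First I would invoke Theorem \ref{berg-down}: since $y$ is essentially recurrent, there is a $T$-invariant Borel probability measure $\mu$ on $X$ with $\mu(V)>0$ for every neighborhood $V$ of $y$. Shrinking $U$ if necessary, write $U\supseteq V\times W$ with $V,W$ open neighborhoods of $y$; we may also assume $W\subseteq V$. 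The plan is to choose the $s^{(i)}$ recursively so that (a) each $s^{(i)}$ is an "$IP$-return time" that keeps many points of a positive-measure set inside $W$, and (b) the partial sums $c\,s^{(j)}+i_{j-1}s^{(j-1)}+\cdots+i_0 s^{(0)}$ (with $c=1$, $|i_k|\le p$) simultaneously move $x$ into $V$ and $y$ back into a neighborhood of $y$.

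**The recursive construction.** I would build $s^{(0)},s^{(1)},\ldots,s^{(m)}$ together with a decreasing sequence of positive-measure sets $A_0\supseteq A_1\supseteq\cdots$ and a nested sequence of $IP$-subsystems. Start with $A_{-1}=W$ (viewed with $\mu(W)>0$). At stage $j$, given $A_{j-1}$ with $\mu(A_{j-1})>0$, apply the multiple $IP$-recurrence theorem to the $2p+1$ commuting $IP$-systems of transformations $\{(T_\alpha)^{i}\}_{-p\le i\le p}$ (where $(T_\alpha)$ ranges over an $IP$-system of powers of $T$ yet to be fixed) to extract an $\alpha$ and set $s^{(j)}=$ the corresponding integer from a suitable $IP$-system, so that $\mu\big(A_{j-1}\cap\bigcap_{|i|\le p} T^{-i s^{(j)}}A_{j-1}\big)>0$; call this intersection $A_j'$. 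The key point is that because $(y,y)$ is in the orbit closure of $(x,y)$, for a set of times of positive density (indeed, of positive $\mu$-measure of starting configurations, after passing to the product system and using that $\mu\times$(orbit measure) sees the diagonal) the pair $(T^n x,T^n y)$ lands in $V\times W$; intersecting with $A_j'$ keeps positive measure. One then refines $A_j\subseteq A_j'$ so the $x$-coordinate constraint is respected, using that the hitting set for $V$ by $x$ relative to $y$'s return has positive upper Banach density along the relevant $IP$-set, and applies Theorem \ref{combSze} (or Theorem \ref{IPmrt} in the product) to intersect the finitely many constraints coming from all choices $|i_0|,\ldots,|i_{j-1}|\le p$.

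**Extracting the system and the role of divisibility.** After $m+1$ stages, any point $z$ in the final positive-measure set $A_m$ has the property that $T^{\,s^{(j)}+i_{j-1}s^{(j-1)}+\cdots+i_0 s^{(0)}} z$ lies in $W$ for all admissible $i_k$ and all $j\le m$; transferring this back to the actual orbit of $x$ (via the fact that $(y,y)\in\overline{O(x,y)}$ and a compactness/approximation argument choosing a genuine $n$ with $(T^n x,T^n y)$ close to $(z,z)$ and the finitely many displacements also close) yields that all the numbers $c\,s^{(j)}+i_{j-1}s^{(j-1)}+\cdots+i_0 s^{(0)}$ lie in $S_U$, i.e.\ they form an $(m,p,1)$-system inside $S_U$. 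For the general case $c>1$ sketched as a remark, Lemma \ref{divisible} lets one pass to an $IP$-subsystem on which every $s_\alpha$ is divisible by $c$, so that replacing $s^{(i)}$ by $s^{(i)}/c$ reduces to the $c=1$ argument.

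**Main obstacle.** The delicate part is the bookkeeping that interleaves \emph{two} kinds of constraints — the measure-theoretic return of a positive-measure set (handled by Theorem \ref{IPmrt}) and the topological requirement that $x$'s orbit actually visits $V$ at the prescribed times — while keeping all of the finitely many partial-sum displacements simultaneously controlled, and then converting the measure-theoretic existence into a single concrete return time $n$ for the orbit of $x$. Making the nested $IP$-systems compatible at every stage (so that each new $s^{(j)}$ is chosen from an $IP$-subsystem of the previous one, with divisibility built in where needed) and verifying that the positive-measure sets never collapse is where the real work lies; the rest is Deuber's criterion plus routine approximation.
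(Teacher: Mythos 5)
Your overall instinct---combine Theorem \ref{berg-down} with the multiple $IP$-recurrence theorem and induct on the number of generators---is the right one, but two essential mechanisms are missing, and in both cases the hand-waving covers exactly the hard point. First, Theorem \ref{IPmrt} needs genuine $IP$-systems of transformations, and for the displacements $i_{j-1}s^{(j-1)}+\cdots+i_0 s^{(0)}$ to vary in an $IP$ fashion the \emph{previously constructed generators themselves} must come in an $IP$-family, not as single numbers. Your recursion produces one number $s^{(j)}$ per stage, and your phrase ``where $(T_\alpha)$ ranges over an $IP$-system of powers of $T$ yet to be fixed'' is precisely the unfilled hole. The paper fills it with Lemma \ref{onetomany}: a separate bootstrapping argument (iterated intersection of the neighborhoods $U_{k+1}\subseteq\bigcap_{n\in D_k}(T\times T)^{-n}U_k$) showing that if every $S_U$ contains one $(m,p,1)$-system then it contains a whole $IP$-system of generating tuples. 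Without an analogue of this lemma your appeal to Theorem \ref{IPmrt} at stage $j$ cannot be set up.

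Second, and more seriously, your conversion of the measure-theoretic conclusion into membership in $S_U$ does not go through. You run the measure argument on $X$ with a measure seeing only $y$, ending with a positive-measure set $A_m$ of points $z$, and then want an $n$ with $(T^nx,T^ny)$ near $(z,z)$. But all you know is $(y,y)\in\overline O(x,y)$; a $\mu$-typical $z\in\overline{O(y)}$ need not satisfy $(z,z)\in\overline O(x,y)$, so no such $n$ need exist. Your intermediate attempt to handle the $x$-coordinate (``the hitting set for $V$ by $x$ relative to $y$'s return has positive upper Banach density along the relevant $IP$-set'') is asserted, not proved, and is exactly the statement in question. The paper avoids this entirely by working in the product system from the outset: Theorem \ref{berg-down} applied to the essentially recurrent point $(y,y)$ of $(X\times X,T\times T)$ gives a $T\times T$-invariant $\mu$ with $\mu(U)>0$ supported on $\overline O(y,y)\subseteq\overline O(x,y)$; the set $V=\bigcap(T_\alpha^{(i_0,\dots,i_m)})^{-1}U$ produced by Theorem \ref{IPmrt} is \emph{open} and of positive measure, hence meets the dense orbit of $(x,y)$, yielding a single concrete $s$ with $(T\times T)^s(x,y)\in V$ and hence $s+i_0s_\alpha^{(0)}+\cdots+i_ms_\alpha^{(m)}\in S_U$ for all admissible $(i_0,\dots,i_m)$. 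Until you either prove your density claim for the $x$-coordinate or restructure the argument in the product system, the proof is incomplete.
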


The proof of Proposition \ref{smallversion} is based on the following Lemma:

\begin{lemma}\label{onetomany} Fix some $p\in\na$. If $m\ge 0$ is such that for every $U\ni (y,y)$, $S_U$ contains a $(m,p,1)$-system, then for every $U$,
$S_{U}$ contains a family of
$(m,p,1)$-systems such that their generating $(m+1)$-tuples
$\big(s_{\alpha}\pb0,\dots,s_{\alpha}\pb m\big)_{\alpha\in\F}$  form an $IP$-system in $\N^{m+1}$.
\end{lemma}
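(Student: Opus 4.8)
The plan is to build the desired $IP$-system of generating tuples recursively in the variable $\alpha$, using at each step the hypothesis (that every $S_U$ contains some $(m,p,1)$-system) together with the recurrence properties of the point $y$ and the topology of the product system. The key mechanism is a ``shifting'' device: if $S_U$ contains an $(m,p,1)$-system generated by $(s\pb0,\dots,s\pb m)$, and if $N$ is a return time of the orbit of $(x,y)$ toward $(y,y)$ which is much larger than all the numbers appearing in that system, then we want the translated tuple $(N+s\pb0,\dots,N+s\pb m)$ to again generate an $(m,p,1)$-system inside $S_V$ for a prescribed (smaller) neighborhood $V$. The point is that $T^N$ carries a neighborhood of $(y,y)$ close to $(y,y)$, so that $(T^{N+n}x,T^{N+n}y)$ is close to $(T^n y, T^n y)$, hence lands in $U$ whenever $n$ is one of the (finitely many) entries of a fixed $(m,p,1)$-system living near $(y,y)$; here one uses essential recurrence of $y$ only to guarantee the existence of arbitrarily large such return times $N$, i.e.\ that $\{N:(T^N x,T^N y)\in W\}$ is infinite for every $W\ni(y,y)$ (which follows since $(y,y)$ is in the orbit closure of $(x,y)$ and $y$ is recurrent).

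Concretely I would enumerate $\F$ as $\alpha^{(1)}<\alpha^{(2)}<\cdots$ will not quite work since $\F$ is not linearly ordered that way, so instead I would fix a neighborhood $U$ and construct, for $k=1,2,\dots$, finite sets $\beta_1<\beta_2<\cdots<\beta_k$ in $\F$ together with tuples $t\pob{\beta_j}=(s\pob{\beta_j}\pb0,\dots,s\pob{\beta_j}\pb m)$ and a decreasing sequence of neighborhoods $U=U_0\supseteq U_1\supseteq\cdots$ of $(y,y)$, such that:
(i) for each nonempty $F\subseteq\{1,\dots,k\}$, setting $t\pob F=\sum_{j\in F}t\pob{\beta_j}$, the tuple $t\pob F$ generates an $(m,p,1)$-system contained in $S_U$;
(ii) $s\pob{\beta_{k+1}}\pb0,\dots$ will be chosen enormous compared to everything already constructed, using a large return time $N$ of $(x,y)$ into $U_k$, plus a base $(m,p,1)$-system produced by the hypothesis applied to a suitable small $U_{k+1}$. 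The additivity in (i) is exactly what ``$(s_\alpha\pb0,\dots,s_\alpha\pb m)_{\alpha\in\F}$ forms an $IP$-system'' means. The verification that $t\pob F$ still generates an $(m,p,1)$-system uses that adding a common large constant $N$ to each coordinate $s\pb0,\dots,s\pb m$ of a generating tuple preserves the defining shape of an $(m,p,1)$-system (the coefficients $c,i\pob j$ with $|i\pob j|\le p$ are unchanged), and that all the resulting array entries still lie in $U$ because each of them is, modulo the large translate, within a prescribed small distance of the corresponding entry of a fixed $(m,p,1)$-system sitting near $(y,y)$.

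The main obstacle, and the step I would spend the most care on, is the bookkeeping that makes (ii) compatible with (i) \emph{for all} subsets $F$ at once: when I translate by $N$ to produce $t\pob{\beta_{k+1}}$, the partial sums $t\pob F$ for $F\ni k+1$ involve $N$ once (good, it is a single large translate), but I must ensure the entries of the resulting array are controlled by a neighborhood $W$ of $(y,y)$ small enough that $T^N W$ sits inside $U$ \emph{uniformly over the finitely many shapes and finitely many subsets $F$ considered at stage $k$} --- this is where I pick $U_{k+1}\subseteq W$ first and only then pick $N$. I would also need the elementary observation (essentially Lemma \ref{divisible} is not needed here since $c=1$, but an analogous gap/size argument is) that one can always take $N>\max(\text{all entries so far})$ so that the new $\beta_{k+1}$ genuinely satisfies $\beta_k<\beta_{k+1}$ in $\F$ and the sums $s_\alpha$ over distinct $\alpha\in\F$ are unambiguous. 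Once this inductive construction is in place, letting $k\to\infty$ yields the full $IP$-system $(\beta_j)_{j}$, and since the original $U$ was arbitrary we get the statement for every $U$.
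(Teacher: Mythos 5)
There is a genuine gap in your central mechanism. Replacing a generating tuple $(s^{(0)},\dots,s^{(m)})$ by $(N+s^{(0)},\dots,N+s^{(m)})$ for a large return time $N$ does not translate the associated Deuber array by $N$. The entries of an $(m,p,1)$-system are the linear forms $s^{(j)}+i_{j-1}s^{(j-1)}+\dots+i_0s^{(0)}$, so under $s^{(k)}\mapsto s^{(k)}+N$ the entry indexed by $(j;i_0,\dots,i_{j-1})$ changes by $N\,(1+i_0+\dots+i_{j-1})$, a coefficient ranging over $[1-jp,\,1+jp]$: it vanishes for the entry $s^{(1)}-s^{(0)}$ (that entry does not move at all, and there is no reason it lies in $S_U$), and for $i_0=-2$ the ``translated'' entry $s^{(1)}-2s^{(0)}-N$ is negative once $N$ dominates, so the translated tuple does not even generate a set of positive integers. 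Your return-time argument shows only that the set $N+D$ lies in $S_U$, and a translate of an $(m,p,1)$-system is in general not an $(m,p,1)$-system, nor the system generated by the translated tuple; the same defect propagates to all your sums $t_F$ with $F\ni k+1$.

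The repair is to use additivity of generating tuples rather than translation, which is what the paper does. Since each entry is a homogeneous linear form in the generating tuple, the system generated by $\sigma_1+\sigma_2$ consists of sums of \emph{corresponding} entries of the systems $D_1$, $D_2$ generated by $\sigma_1$, $\sigma_2$; hence it suffices to arrange $D_1+D_2\subseteq S_{U_1}$. For this, take $U_1=V\times V$ symmetric, let $D_1\subseteq S_{U_1}$ be given by the hypothesis, note that $U_2'=\bigcap_{n\in D_1}(T\times T)^{-n}(U_1)$ is open and contains $(x,y)$ and (by symmetry of $U_1$) also $(y,y)$, and apply the hypothesis to a symmetric neighborhood $U_2\subseteq U_1\cap U_2'$ of $(y,y)$ to obtain $D_2\subseteq S_{U_2}$. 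Then every $n_2\in D_2$ satisfies $(T^{n_1+n_2}x,T^{n_1+n_2}y)\in U_1$ for all $n_1\in D_1$, so $D_1+D_2\subseteq S_{U_1}$, and one iterates. Your surrounding bookkeeping --- nested neighborhoods $U_0\supseteq U_1\supseteq\cdots$ and reducing the $IP$-condition to sums over subsets $F\subseteq\{1,\dots,k\}$ --- is exactly right; only the step that produces the next generating tuple must be replaced as above.
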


\begin{proof} Clearly, it suffices to consider {\it symmetric} sets $U$,
i.e.\ sets of the form $V\times V$, where $V$ is an open set containing
$y$.

Fix $m\ge 0$ for which the assumption holds. Fix a symmetric open
set $U_1\ni (y,y)$. We know that $S_{U_1}$ contains a $(m,p,1)$-system $D_1$ generated by some $(m+1)$-tuple $\big(s_1\pb 0,\dots,s_1\pb m\big)$. In particular, the set
$$
U'_2 = \bigcap_{n\in D_1} (T\times T)^{-n}(U_1)
$$
contains $(x,y)$ and, since $U_1$ was symmetric, also $(y,y)$. Let now
$U_2$ be a symmetric neighborhood of $(y,y)$ contained in $U_1\cap U'_2$. By
assumption, the set $S_{U_2}$ also contains a $(m,p,1)$-system
$D_2$ generated by $\big(s_2\pb 0,\dots,s_2\pb m\big)$. Clearly $S_{U_2}\subset S_{U_1}$, so $S_{U_1}$ contains both $D_1$ and $D_2$. Moreover, it
contains the algebraic sum of $D_1$ and $D_2$. In
particular, it contains the Deuber system $D_{1,2}$ generated by
$\big(s_1\pb0+s_2\pb0,\ldots, s_1\pb m+s_2\pb m\big)$.
Continuing by an obvious induction we construct a family of Deuber
systems as in the assertion. \end{proof}

\begin{proof}[Proof of Proposition \ref{smallversion}.] We proceed by induction on
$m$.

For $m=0$ the $(m,p,1)$-system reduces to a single number
$s_0$, such that $(T\times T)^{s_0}(x,y)\in U$. The set of such
numbers $s_0$ is nonempty for every open $U\ni(y,y)$, because
$(y,y)\in\oo(x,y)$.

Suppose the assertion holds for some $m$ and all open sets $U\ni(y,y)$.
Fix $U$. By Lemma \ref{onetomany}, $S_U$ contains many such systems indexed by
$\alpha\in \F$, where the generating $(m+1)$-tuples $\big(s_{\alpha}\pb0,\dots,s_{\alpha}\pb m\big)$ form an $IP$-system in $\N^{m+1}$. Then for any
fixed integers $i\pob 0,\dots,i\pob m$ the numbers
$i\pob 0s\pb 0_\alpha+\dots+i\pob m s\pb m_\alpha$ (with varying $\alpha$) form an
$IP$-system and
$\big(R^{i\pob 0s\pb 0_\alpha+\ldots+i\pob m s\pb m_\alpha}\big)_{\alpha\in\Cal F}$ is an $IP$-system of transformations for any given transformation $R$. Let $(i\pob 0,\dots,i\pob m)$ range over the integers in $[-p,p]^{m+1}$ and consider the following $(2p+1)^{m+1}$ commuting $IP$-systems of transformations on $X\times X$: 
$$T_\alpha^{(i \pob 0,\dots,i \pob m)} = (T\times T)^{i\pob 0s\pb 0_\alpha+\dots+i\pob m s\pb m_\alpha}.
$$
Apply Theorem \ref{berg-down} to $(y,y)\in \oo (y,y) (\subset (X\times X, T\times T))$ to get a $T\times T$-\im\ $\mu$ which assigns positive measure to $U$. Since $\mu$ is preserved by all
the above transformations, Theorem \ref{IPmrt}
asserts that there exists an $\alpha\in\Cal F$ such that
$$
V=\bigcap_{(i \pob 0,\dots,i \pob m)\in [-p,p]^{m+1}}
\big(T_\alpha^{(i \pob 0,\dots,i \pob m)}\big)^{-1}U 
$$
has positive measure. Since $\mu$ is supported by $\oo(y,y)\subset
\oo(x,y)$, these facts imply that there exists an integer $s$, such
that $(T\times T)^s(x,y) \in V$. This, in turn, implies that the
numbers $s + i \pob 0s_\alpha\pb0+\dots+i \pob m s_\alpha\pb m$ belong to $S_U$ for
all $(i \pob 0,\ldots, i \pob m)\in [-p,p]^{m+1}$. Because $S_U$ already
contains the $(m,p,1)$-system generated by the $(m+1)$-tuple
$\big(s_{\alpha}\pb0,\ldots,s_{\alpha}\pb m\big)$, we have proved that $S_U$ also
contains the $(m+1,p,1)$-system generated by the $(m+2)$-tuple
$\big(s_{\alpha}\pb0,\ldots,s_{\alpha}\pb m, s\big)$. The proof of Proposition \ref{smallversion} is
now complete. \end{proof}

Finally we explain what has to be changed if $c\neq 1$. Lemma \ref{onetomany} is valid without any significant changes in the proof, if we just replace every appearance of ``$(m,p,1)$-system'' with ``$(m,p,c)$-system''. The same holds true for the inductive step in the proof of Proposition \ref{smallversion} up to the point where $s$ is chosen. In order to achieve that $S_U$ contains an $(m+1,p,c)$-system, we would need that $s$ is divisible by $c$ but at this point it is not obvious why this should be the case. Thus we end up with $S_U$ containing an $(m+1,p,c)$-system generated by $\big(s\pb 0_\alpha,\ldots,s\pb m_\alpha,s\big)=\big(t\pb 0,\ldots,t\pb {m+1}\big)$  which is flawed in the sense that $t\pb {m+1}$ is multiplied by $1$ instead of $c$. However we can apply Lemma \ref{onetomany} to see that $S_U$ actually contains such structures generated by $(m+2)$-tuples which form an  $IP$-system $\big(t\pb0_\alpha, \ldots,t\pb{m+1}_\alpha\big)_{\alpha\in\F}$.  Hence we can apply Lemma \ref{divisible} to get that $t\pb{m+1}_\alpha$ is divisible by $c$ if $\alpha\in \F$ is properly chosen. Therefore $S_U$ contains the $(m+1,p,c)$-system generated by $\big(t\pb0_\alpha, \ldots, t\pb m_\alpha,t\pb{m+1}_\alpha/c\big)$ and thus  the $c\neq 1$ version of Proposition \ref{smallversion} also holds for $m+1$.

\section{A proof via ultrafilters}\label{ultrafilterSection}

In this section we use the algebraic structure of the Stone-\v Cech compactification  $\beta \N$  of $\N$ to give a proof to Proposition \ref{viaDeuber}. We start with a brief description of the concepts required in this proof, see \cite{Berg03} for a short ``self contained'' or \cite{HiSt98} for an exhaustive treatment of the algebraic structure on $\beta \N$.

Take $\beta \N$ to be the set of all ultrafilters on $\N$. A non empty system of sets $q\subsetneq \PP(\N)$  is called a \emph{filter} if it is closed under forming supersets and finite intersections. It is an \emph{ultrafilter} if it is a filter with the additional property that whenever $C_1\cup\ldots\cup C_n=\N$, some $C_i$ lies in $D$. Using the axiom of choice, it is possible to show that $|\beta \N|=2^{2^{\N}}$ but the  only elements of $\beta \N$ which can be explicitly constructed are the \emph{principle ultrafilters} $q(n)=\{S\subset \N: n\in S\}$ where $n\in \N$. While not being overly exciting, they allow us to view $\N$ as a subset of $\beta \N$ by identifying each $n\in\N$ with $q(n)\in \beta\N$.   
Using standard properties  of the Stone-\v Cech compactification one obtains that there exists a unique extension of the addition on $\N$ to $\beta \N$ such that the map $q\mapsto r+q$ is continuous for every $r\in \beta \N$ and $q\mapsto q+n$ is continuous for every $n\in \N$. (Note that it is not possible to have both $q\mapsto q+r$ \emph{and} $ q\mapsto r+q$ continuous for all $r\in \beta \N$. In particular $+$ is highly non commutative on $\beta \N$.)
An explicit description of the addition on $\beta \N$ is given by 
\begin{align}\label{ED}
S\in q+r\ \Leftrightarrow \ \{n\in\N:S-n\in q\}\in r.
\end{align}
 (Here $S-n = \{k\in \N:k+n\in \N\}$.) Another interpretation of $+$ is obtained if we interpret ultrafilters as $\{0,1\}$-valued finitely additive measures. Then the addition turns out to be just the convolution of measures. Thus it is not very surprising that $+$ is associative. In fact, the compactness of $\beta \N$ together with continuity of $r+q$ in the right argument guarantees that $(\beta \N, +)$ is a semigroup with quite rich algebraic structure. Moreover, algebraic properties of ultrafilters are nicely linked with combinatorial properties of their elements as is exemplified by the following facts.
\begin{itemize}

\item Similar to finite semigroups, 
$\beta\N$ contains \emph{idempotents}, that is elements $q$ such that $q+q=q$. A set $S\subset \N$ is contained in  an idempotent  ultrafilter iff there exists an $IP$-system $(s_\alpha)_{\alpha \in \F}$ such that  all $s_\alpha$ lie in $S$. 

\item A subset $I$ of a semigroup $(G,+)$ is a two sided ideal if $I+G,G+I\subset I$. 
It can be shown that $\beta\N$ has a smallest two sided ideal $K(\beta \N)$ (with respect to inclusion). A set $S\subset\N$ is piecewise syndetic iff there exists $q\in K(\beta \N)$ such that $S\in q$.   
\end{itemize}

An ultrafilter which is idempotent and lies in the smallest ideal of $\beta \N$ is called a \emph{minimal idempotent}. It was established in \cite[Corollary 6.12]{BeHi90} that $S\subset \N$ is central iff there exists a minimal idempotent $q$ such that $S\in q$.

Replacing piecewise syndetic with positive upper Banach density leads to the  class of  \emph{essential idempotents}:  $q\in\beta \N$ is an essential idempotent iff it is an  idempotent ultrafilter, all of whose elements have positive upper Banach density.   
By \cite[Theorem 2.8]{BeDo08}, $S  \subset\N$ is a $D$-set iff it is contained in some essential idempotent. 

We are now ready to employ these concepts to prove that $D$-sets satisfy the conclusion of Furstenberg's Central Sets Theorem (\cite[Proposition 8.21]{Furs81}).

\begin{theorem}\footnote{While stronger versions of the Central Sets Theorem hold true (see in particular \cite{DeHS08}), we chose to go with this version to keep the formulation simple.}\label{DCST}
Assume that $S$ is a $D$-set and  that $\big(s\pb1_\alpha\big)_{\alpha\in\F}, \ldots, \big(s\pb p_\alpha\big)_{\alpha\in\F}$ are $IP$-systems. There exist sequences $a_ 1,a_ 2, \ldots \in \N$ and  $\alpha_1< \alpha_2<\ldots$ in $\F$ such that 
for all $k_1< \ldots< k_l$ and $i\in \nhat p$,
\begin{equation}\label{frich} \big(a_{k_1} + s\pb i_{\alpha_{k_1}}\big)+ \ldots +
\big(a_{k_l} + s\pb i_{\alpha_{k_l}}\big)\in S.\end{equation}
\end{theorem}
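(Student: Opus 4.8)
The plan is to combine the ultrafilter characterization of $D$-sets with the $IP$ Szemer\'edi Theorem (Theorem \ref{combSze}), following the standard template for Central Sets Theorem arguments but carried out inside an \emph{essential} idempotent rather than a minimal one. Since $S$ is a $D$-set, by \cite[Theorem 2.8]{BeDo08} there is an essential idempotent $q\in\beta\N$ with $S\in q$; every element of $q$ has positive upper Banach density, which is exactly what is needed to feed into Theorem \ref{combSze}. I would build the sequences $a_n$ and $\alpha_n$ by simultaneous recursion so that at each stage the finite sums of the ``completed'' terms constructed so far all lie in $S$, and moreover lie in a shrinking family of members of $q$ that records the constraints imposed by future choices.

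First I would set up the bookkeeping. For a finite sequence of already-chosen indices and shifts, and for a set $B\in q$, let me say a pair $(a,\alpha)$ is \emph{good for $B$} if for every $i\in\nhat p$ we have $a+s\pb i_\alpha\in B$ and, writing $B^\star$ for the set of $n$ with $B-n\in q$ (which lies in $q$ because $q+q=q$), also $a+s\pb i_\alpha\in B^\star$. The key point is that such a pair exists: the finitely many $IP$-systems $(s\pb1_\alpha)_\alpha,\ldots,(s\pb p_\alpha)_\alpha$ together with the IP-system $(s\pb1_\alpha)_\alpha$ again (to handle the $B^\star$ membership one enlarges the list of IP-systems), applied via Theorem \ref{combSze} to the positive-upper-Banach-density set $B\cap B^\star\in q$, produce $\alpha\in\F$ and $a\in B\cap B^\star$ with $a+s\pb i_\alpha\in B\cap B^\star$ for all $i$; and by passing to a tail of each IP-system (every IP-system contains arbitrarily ``late'' sub-IP-systems) one can force $\alpha$ to have $\min\alpha$ larger than any prescribed bound, which is what yields $\alpha_1<\alpha_2<\cdots$. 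The recursion then runs: having chosen $a_1,\ldots,a_{n-1}$ and $\alpha_1,\ldots,\alpha_{n-1}$, let $B_{n-1}$ be the intersection of $S$ with all sets $S-\bigl(\sum_{j\in F}(a_j+s\pb i_{\alpha_j})\bigr)$ over nonempty $F\subseteq\nhat{n-1}$ and $i\in\nhat p$; an idempotent-ultrafilter computation using \eqref{ED} shows $B_{n-1}\in q$. Apply the good-pair fact to $B_{n-1}$ to obtain $(a_n,\alpha_n)$ with $\min\alpha_n>\max\alpha_{n-1}$, and check that for every nonempty $F\subseteq\nhat n$ and every $i$, $\sum_{j\in F}(a_j+s\pb i_{\alpha_j})\in S$ — this is immediate when $n\in F$ from $a_n+s\pb i_{\alpha_n}$ lying in the relevant translate of $S$, and from the inductive hypothesis when $n\notin F$. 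The use of $B^\star$ membership is precisely what guarantees that after the next step the new translated sets are still in $q$, so the recursion does not break down.

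The one genuinely delicate point, and the place I expect the main obstacle, is the interaction between the idempotent structure of $q$ and the $IP$-system parameter $\alpha$: in the classical Central Sets Theorem the ``shift'' at stage $n$ is a single integer produced inside a member of the minimal idempotent, whereas here each stage produces \emph{both} an integer $a_n$ and an index $\alpha_n$, and one must ensure the membership relations $a_n+s\pb i_{\alpha_n}\in B^\star$ hold simultaneously for all $i$ while also controlling $\min\alpha_n$. This is handled by the observation that Theorem \ref{combSze} is flexible enough to accept finitely many IP-systems at once and to be applied after passing to a common tail, so that positivity of upper Banach density of $B_{n-1}\cap B_{n-1}^\star$ is the only hypothesis consumed — and that positivity is exactly the defining feature of elements of an essential idempotent. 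Everything else (that $B_{n-1}\in q$, that $B^\star\in q$, that finite sums decompose correctly) is a routine unwinding of the definition of $+$ on $\beta\N$ via \eqref{ED} together with $q+q=q$, of the type that appears in \cite{HiSt98}. Finally, the displayed conclusion \eqref{frich} for arbitrary $k_1<\cdots<k_l$ is just the statement, read off at the end, that $\sum_{j\in F}(a_j+s\pb i_{\alpha_j})\in S$ for the finite set $F=\{k_1,\ldots,k_l\}$, which was arranged at stage $\max F$ of the recursion and never destroyed afterward.
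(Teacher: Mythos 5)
Your proposal is correct and follows essentially the same route as the paper: pass to an essential idempotent $q$ containing $S$ (via \cite[Theorem 2.8]{BeDo08}), feed the positive-upper-Banach-density members of $q$ into the $IP$ Szemer\'edi Theorem (Theorem \ref{combSze}, applied to tails of the $IP$-systems to force $\alpha_{n+1}>\alpha_n$), and run the standard star-trick recursion. The only cosmetic difference is that the paper applies the star operation once to $S$ and uses Lemma \ref{idtrick} to keep all finite sums in $S^\star$, whereas you re-star the intersection set $B_{n-1}$ at each stage; the two bookkeeping schemes are equivalent.
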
 

The following standard Lemma (cf.\ \cite[Lemma 4.14]{HiSt98}) nicely simplifies the inductive process used in the proof of Theorem \ref{DCST}. 
\begin{lemma}\label{idtrick}
Let $q$ be an idempotent ultrafilter, let $S\in q$ and set $S^\star=\{n\in S:S-n\in q\}$. Then $S^\star\in q$ and $S^\star-n\in q$ for all $n\in S^\star$. 
\end{lemma}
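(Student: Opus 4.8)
The plan is to read everything off the explicit description of the semigroup operation in \eqref{ED} together with idempotency $q+q=q$, using nothing beyond the fact that $q$ is a filter (closed under supersets and finite intersections). Write $A=\{n\in\N:S-n\in q\}$. Applying \eqref{ED} with $r=q$ gives that $S\in q+q=q$ holds iff $A\in q$; since $S\in q$ by hypothesis, we conclude $A\in q$. Now observe that by definition $S^\star=S\cap A$, so $S^\star\in q$ as the intersection of two members of the filter $q$. This settles the first assertion.

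For the second assertion, fix $n\in S^\star$; in particular $S-n\in q$. I would show that $S^\star-n$ is again an intersection of two sets in $q$. Unravelling the definition, $k\in S^\star-n$ means $k+n\in S^\star$, i.e.\ $k+n\in S$ and $S-(k+n)\in q$. The first condition says $k\in S-n$, and using the shift identity $(S-n)-k=S-(n+k)$ the second condition says $(S-n)-k\in q$. Hence
$$S^\star-n=(S-n)\cap\{k\in\N:(S-n)-k\in q\}.$$
The left-hand factor lies in $q$ precisely because $n\in S^\star$. For the right-hand factor I apply \eqref{ED} once more, now to the set $S-n\in q$: since $q$ is idempotent, $S-n\in q+q=q$ forces $\{k\in\N:(S-n)-k\in q\}\in q$. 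Thus $S^\star-n$ is the intersection of two members of $q$ and therefore lies in $q$.

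There is essentially no obstacle here beyond careful bookkeeping with the shift operation; the entire content is that \eqref{ED} converts the statements ``$S\in q$'' and ``$S-n\in q$'' into statements asserting that the derived sets $A$ and $\{k:(S-n)-k\in q\}$ belong to $q$, after which closure of the filter under finite intersection does the rest. The one point that deserves a moment's attention is recognizing that the second defining condition for membership in $S^\star-n$ is a condition on the shift $S-n$ rather than on $S$ itself; this is exactly what makes idempotency applicable to $S-n$ in place of $S$, and it is where the hypothesis $n\in S^\star$ (and not merely $n\in S$) is used.
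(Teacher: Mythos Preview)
Your proof is correct and follows essentially the same line as the paper's: write $S^\star=S\cap\{m:S-m\in q\}$ and use \eqref{ED} with idempotency to see both intersectands lie in $q$, then for $n\in S^\star$ rewrite $S^\star-n=(S-n)\cap\{k:(S-n)-k\in q\}$ and again appeal to \eqref{ED} applied to $S-n\in q$. The only cosmetic difference is that the paper reaches the displayed identity for $S^\star-n$ via the intermediate step $(S-n)\cap(\{m:S-m\in q\}-n)$, whereas you unravel the definition directly.
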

\begin{proof}
  $S^\star=S\cap \{m\in \N: S-m  \in q\}\in q$ by (\ref{ED}) and since $q$ is closed under finite intersections. Given $n\in S^\star$, we have $S^\star-n=(S-n)\cap\{m\in\N:S-m\in q\}-n=(S-n)\cap\{m\in\N:(S-n)-m\in q\}.$ The first set lies in $q$ since $n\in S^\star$ and the second set lies in $q$ because $S-n\in q$ and we can substitute $S-n$ for $S$ in (\ref{ED}). 
\end{proof}

\begin{proof}[Proof of Theorem \ref{DCST}.]
 Let $q$ be an essential idempotent such that $S\in q$ and define $S^\star$ as in Lemma \ref{idtrick}. We will inductively construct $a_ 1,a_ 2, \ldots$ and $\alpha_ 1< \alpha_ 2< \ldots\in \F$ such that (\ref{frich}) is satisfied. To keep the induction going we will in fact demand that (\ref{frich}) is even
true with $S$ replaced by $S^\star$. To start the construction use the
fact that $S^\star$ has positive upper Banach density together with  Theorem \ref{combSze} to find $a_ 1$ and $\alpha_1$ such that $a_1+s\pb i_{\alpha_1}\in S^\star$ for all $i\in \nhat p$.

Assume that after  $n$ steps we have found $a_1,\ldots, a_ n$ and $\alpha_1,\ldots ,\alpha_n$ such that all $t$ which are of the form
$$t=\big(a_{k_1}+s\pb i_{\alpha_{k_1}}\big)+\ldots+\big(a_{k_l}+s\pb i_{\alpha_{k_l}}\big)$$ for some $k_1<\ldots<k_l\leq n$ and $i\in \nhat p$ lie in $S^\star$. Then all
sets $S^\star-t$ are in $q$ and hence so is the intersection $B$ of
$S^\star$ with all the sets $ S^\star-t$. Thus we may use Theorem \ref{combSze}
to find $a_{n+1}$ and $\alpha_{n+1}>\alpha_n$\footnote{To see that one can in fact require that $\alpha_{n+1} >\alpha_n$, apply Theorem \ref{combSze} to the $IP$-systems generated by the numbers the sequences $(s\pb i_k)_{k>\max \alpha_n}$. } such that
$a_{n+1}+s\pb i_{\alpha_{n+1}}\in B$ for all $i\in\nhat p$. Then by the definition of $B$, $$ \big(a_{n+1} +s\pb i_{\alpha_{n+1}}\big), t+ \big(a_{n+1} +s\pb i_{\alpha_{n+1}}\big) \in S^\star,$$ for all $t$ as above and for all $i\in \nhat p$. Continuing in this fashion we arrive at the desired statement. \end{proof}

Finally Proposition \ref{viaDeuber} follows from Theorem \ref{DCST} using the following purely combinatorial fact.
\begin{prop}\label{OldWine}
Let  $S\subseteq \N$. Assume that for every $q\in \N$ and  $IP$-systems $\big(s\pb1_\alpha\big)_{\alpha\in\F}, \ldots,$ $ \big(s\pb q_\alpha\big)_{\alpha\in\F}$ there exist sequences $a_ 1,a_ 2, \ldots \in \N$ and  $\alpha_1< \alpha_2<\ldots$ in $\F$ such that 
for all $k_1< \ldots< k_l$ and $i\in \nhat q$,
\begin{equation}\label{frich2} \big(a_{k_1} + s\pb i_{\alpha_{k_1}}\big)+ \ldots +
\big(a_{k_l} + s\pb i_{\alpha_{k_l}}\big)\in S.\end{equation}
(In short, let $S$ be a set which satisfies the conclusion of the Central Sets Theorem, i.e.\ the conclusion of Theorem \ref{DCST} above.) 

Then $S$ contains an $(m,p,c)$-system for all positive integers $m,p,c$.
\end{prop}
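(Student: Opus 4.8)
The plan is an induction on $m$, resting on the self‑similar shape of Deuber systems: the $(m,p,c)$‑system generated by $(s^{(0)},\dots,s^{(m)})$ is exactly
\[
 \{cs^{(0)}\}\ \cup\ \bigcup_{|i_0|\le p}\bigl(i_0s^{(0)}+D'\bigr),
\]
where $D'$ denotes the $(m-1,p,c)$‑system generated by $(s^{(1)},\dots,s^{(m)})$ (a routine regrouping of the rows of the array by the value of $i_0$). Hence, assuming inductively that \emph{every} set satisfying the hypothesis already contains an $(m-1,p,c)$‑system, it suffices to produce a positive integer $s^{(0)}$ such that $cs^{(0)}\in S$ and the residual set $S'=\bigcap_{|i_0|\le p}(S-i_0s^{(0)})$ again satisfies the conclusion of the Central Sets Theorem; applying the induction hypothesis to $S'$ yields $D'\subseteq S'$, and the displayed identity then places the whole $(m,p,c)$‑system inside $S$, since $\{cs^{(0)}\}\subseteq S$ and $D'+\{i_0s^{(0)}:|i_0|\le p\}\subseteq S$.

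For the base case $m=0$ we only need $s^{(0)}$ with $cs^{(0)}\in S$. Feed the hypothesis the single $IP$‑system generated by the constant sequence $(c)$ to obtain $a_1,a_2,\dots$ in $\N$ and $\alpha_1<\alpha_2<\dots$ in $\F$ with $\sum_{k\in F}(a_k+s^1_{\alpha_k})\in S$ for every $F\in\F$; by Lemma~\ref{divisible} applied to the $IP$‑system $(a_\alpha)_{\alpha\in\F}$ (equivalently, by pigeonhole on the partial sums of $(a_k)$ modulo $c$) choose $F$ with $c\mid\sum_{k\in F}a_k$. Since $s^1_{\alpha_k}=c\,|\alpha_k|$ is also divisible by $c$, the element $\sum_{k\in F}(a_k+s^1_{\alpha_k})$ of $S$ is a multiple of $c$, say $cs^{(0)}$, and we are done. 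This divisibility device is exactly what reduces the bookkeeping for a general $c$ to the case $c=1$ throughout, just as at the end of the dynamical proof.

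For the inductive step everything hinges on constructing a suitable $s^{(0)}$. The point is that the hypothesis makes $S$ very recurrence‑rich: already applying it to the constant $IP$‑systems generated by $(i_0)_n$, $|i_0|\le p$, and passing to a sub‑$IP$‑system on which $|\alpha_n|$ is constant, one exhibits inside $S$ whole families of $(2p+1)$‑term arithmetic progressions whose centre and common difference are controlled $IP$‑sums furnished by the application; the full $IP$‑strength of the hypothesis refines this so that, in a single application, one can also generate the inner $(m-1,p,c)$‑data from blocks disjoint from the block producing $s^{(0)}$, and read off the shifts $i_0 s^{(0)}$ through different colours. One then takes $s^{(0)}$ to be such a common difference, re‑selected via Lemma~\ref{divisible} so that $cs^{(0)}\in S$, and must verify that $S'=\bigcap_{|i_0|\le p}(S-i_0s^{(0)})$ still satisfies the conclusion of the Central Sets Theorem — at which point the induction hypothesis finishes the step.

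I expect this last verification to be the main obstacle. The tension is that an $IP$‑sum over a finite set scales with the set whereas the translate $i_0s^{(0)}$ does not, so the $(2p+1)$ shifts have to be realized inside the finite‑sums structure by isolating a distinguished ``$s^{(0)}$‑block'' from a separate supply of blocks carrying the witnessing $IP$‑structure for $S'$, feeding the shifts into the $IP$‑systems via extra colours indexed by $|i_0|\le p$, and keeping the $c$‑divisibility consistent with all of these choices simultaneously. It is this interlocking of the construction of $s^{(0)}$, of the residual witnesses for $S'$, and of the inner Deuber data — rather than any one ingredient taken alone — where the real work lies; once it is carried out, the recursive identity for Deuber systems closes the induction and yields Proposition~\ref{OldWine}.
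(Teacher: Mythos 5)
Your decomposition of the $(m,p,c)$-system into its first row $\{cs^{(0)}\}$ together with the translates $i_0 s^{(0)}+D'$ of the inner $(m-1,p,c)$-system is a correct identity, and your base case (including the divisibility trick for general $c$) is fine. But the inductive step has a genuine gap that you yourself flag without closing: you never prove that the residual set $S'=\bigcap_{|i_0|\le p}\big(S-i_0 s^{(0)}\big)$ again satisfies the conclusion of the Central Sets Theorem, and that claim is the entire content of the step. The obstruction is structural, not merely technical. The hypothesis only ever delivers elements of $S$ of the form $\big(a_{k_1}+s\pb{i}_{\alpha_{k_1}}\big)+\cdots+\big(a_{k_l}+s\pb{i}_{\alpha_{k_l}}\big)$, where the number of summands $l$ is arbitrary; a fixed translate $i_0 s^{(0)}$ must appear exactly once, independently of $l$, so it cannot be realized as one of the ``colours'' $s\pb{i}$ in an application of the hypothesis (each colour contributes $l$ summands, not one). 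Your suggested devices --- isolating a distinguished $s^{(0)}$-block, passing to sub-$IP$-systems with $|\alpha_n|$ constant, extra colours indexed by $i_0$ --- do not resolve this mismatch. Moreover, for an abstract $S$ satisfying only the \emph{conclusion} of the Central Sets Theorem there is no ultrafilter available, hence no analogue of the $S^\star$ argument of Lemma \ref{idtrick} that would let finite intersections of translates of $S$ inherit the property.

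The paper (following Furstenberg and Hindman--Strauss) avoids this by running the recursion in the opposite direction and with a strengthened inductive claim: one constructs an entire $IP$-system of generating tuples $\big(s\pb0_\alpha,\dots,s\pb m_\alpha\big)_{\alpha\in\F}$ all of whose Deuber configurations lie in $S$, and the new generator is adjoined as the \emph{innermost} entry $s^{(m+1)}$ (it is the $a$ produced by the hypothesis), not as the outermost $s^{(0)}$. The shifts that must accompany the new generator, namely $i_m s\pb m_\alpha+\cdots+i_0 s\pb0_\alpha$ for $(i_0,\dots,i_m)\in[-p,p]^{m+1}$, are then themselves $IP$-systems (precisely because the previously constructed tuples form one), so they can legitimately be fed into the hypothesis as the $(2p+1)^{m+1}$ colours. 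That is exactly the device that reconciles the ``shift does not scale with the block'' tension you identify; without it, or a genuine proof of your closure claim for $S'$, your induction does not go through.
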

The proof of Proposition \ref{OldWine} is sketched in
\cite[page 174]{Furs81} and fully carried out in \cite[Theorem 15.5]{HiSt98}. Therefore we refrain from giving a full proof, but try to explain the required ideas in the case $c=1$.

\begin{proof} To carry out an inductive argument one proves a stronger statement already familiar from Lemma \ref{onetomany}. Fix $S\subset \N$ and $p\in\N$. We show that for each $m\geq 0$ there exists an $IP$-system $\big(s\pb 0_\alpha,\ldots,s\pb m_\alpha\big)_{\alpha \in \F}$ in $\N^{m+1}$ such that
for all $\alpha\in\F$ and all integers $i \pob 0, \ldots, i \pob {m-1}\in [-p,p]^{m+1}$
\begin{equation}\label{Dsys}
s\pb 0_\alpha\in S,\ \  s\pb1_\alpha+i \pob 0 s\pb 0_\alpha\in S,\ \ \ldots,\ \  s\pb m_\alpha+ i \pob {m-1} s\pb{m-1}_\alpha+\ldots + i \pob 0 s\pb0_\alpha\in S.
\end{equation}
The case $m=0$ of our claim asserts precisely that $S$ contains some $IP$-system. This is quite obvious by the assumption on the set $S$. Applying it to the trivial system consisting only of 0's, we find that there exists a sequence $a_1,a_2,\ldots \in \N$ such that $a_{k_1}+\ldots + a_{k_l}\in S$ for all $k_1< \ldots< k_l\in \N$. Setting $s\pb 0_n=a_n $ for $n\in \N$, this means that 
\begin{align}\label{ZeroCase}
\big(s_\alpha\pb 0\big)\in S
\end{align} for all $\alpha \in \F$. 

In order to prove the first non-trivial instance $m=1$, we apply our assumption on $S$ to the $q=(2p+1)$ $IP$-systems 
$$ \big(i \pob 0 s\pb 0_ \alpha \big)_{\alpha\in \F}, \quad (|i \pob 0|\leq p)$$ 
to
find $a_ n, {n\in\N}$ and $\alpha_1< \alpha_2<\ldots $ such that for all $k_1<\ldots<k_l$
\begin{align}\label{FirstCase}
\big(a_{k_1}+i \pob 0 s\pb 0_{\alpha_{k_1}}\big)+ \ldots + \big(a_{k_l}+ i \pob 0 s\pb 0_{\alpha_{k_l}}\big)\in
S.
\end{align}
 Set $t_ n\pb1 = a_ n$ and $t\pb0_ n=s\pb0_{\alpha_n}$ for $n\in\N$. Then it is special case of (\ref{ZeroCase}) that $t\pb 0_\alpha \in S$  for $\alpha\in \F$ and it follows from (\ref{FirstCase}) that $t\pb1_\alpha+i \pob 0 t\pb 0_\alpha\in S$ for $\alpha \in \F$ and all integers  $i_0\in [-p,p]$. Hence the $IP$-system $\big(t_{\alpha}\pb 0, t_\alpha\pb 1 \big)$ witnesses that the case $m=1$ of (\ref{Dsys}) is valid.

To prove the case $m=2$, apply the assumption on $S$ to the $q=(2p+1)^2$ $IP$-systems 
$$ \big(i \pob 1 t\pb 1_ \alpha+i \pob 0 t\pb 0_ \alpha \big)_{\alpha\in \F}, \quad (|i \pob 0|,|i \pob 1|\leq p).$$ The induction continues in the natural way.
\end{proof}

\begin{ack} The authors thank Neil Hindman for useful remarks on an earlier version of this paper. 
\end{ack}

\def\ocirc#1{\ifmmode\setbox0=\hbox{$#1$}\dimen0=\ht0 \advance\dimen0
  by1pt\rlap{\hbox to\wd0{\hss\raise\dimen0
  \hbox{\hskip.2em$\scriptscriptstyle\circ$}\hss}}#1\else {\accent"17 #1}\fi}

\end{document}